\title{Increasing stability of inverse source problem for one dimensional domain }
\author{Shahah Almutairi$^a$ \qquad Ajith Gunaratne $^b$ }
\date{March 2019}
\begin{document}
\maketitle
\newtheorem{theorem}{Theorem}[section]
\newtheorem{lemma}[theorem]{Lemma}
\newtheorem{corollary}[theorem]{Corollary}
\newtheorem{definition}[theorem]{Definition}
\newtheorem{proposition}[theorem]{Proposition}

{\bf Abstract.}

Here we are investigating the one dimensional inverse source problem for Helmholtz equation where the source function is compactly supported in our domain. We show that increasing stability possible using multi-frequency wave at the two end points. Our main result is a stability estimate consists of two parts: the data discrepancy and the high frequency tail.

\vspace{10pt}

\textbf{Keywords:} Inverse source problems, scattering theory, Helmholtz equation

\vspace{10pt}

\textbf{Mathematics Subject Classification}: 35R30; 78A46

\section{Introduction and statement of problem }
We consider the one dimensional Helmholtz equation in a one layered medium:
\begin{equation}
\label{PDE}
u(x,\omega)'' +k^2 u(x,\omega)=f, \quad x\in(-1,1),
\end{equation}
where the wave field $u$ is required to satisfy the outgoing wave conditions:
\begin{equation}
\label{outgoing cond}
   u'(-1,\omega)+iku(-1,\omega)=0, \qquad   u'(1,\omega)-iku(1,\omega)=0
\end{equation}

Given $f \in L^2 (-1,1)$, it is well-known that the problem \eqref{PDE}-\eqref{outgoing cond} has a unique solution:   
\begin{equation}
\label{u}
    u(x,\omega)=\int_{-1}^1 G(x-y) f (y)dy,
\end{equation}
where $G(x)$ is the Green function given as follows
\begin{equation}
\label{GreenF}
    G(x)=\frac{ie^{ik|x|}}{2k}.
\end{equation}
 This work concerns the inverse source problem when the source function $f$ is a complex function with a compact support contained in $(-1,1)$. here our goal is to recover the source function $f$ using the boundary data $u(1,\omega )$ and $u(-1, \Omega)$ with $\omega \in (0,k) $ where $K>1$ is a positive constant. \\

Inverse source problem arises in many area of science. It has numerous applications in  acoustical and biomedical/medical imaging, antenna synthesis, geophysics, and material science (\cite{ABF, B}). It has been known that the data of the inverse source problems for Helmholtz equations with single frequency can not guarantee the uniqueness (\cite{I}, Ch.4). On the other hand, various studies, for instance in \cite{ BLT}, showed that the uniqueness can be regained by taking multifrequency boundary measurement in a non-empty frequency interval $(0,K)$ noticing the analyticity of wave-field on the frequency \cite{I, J}. On the other hand, various studies, for instance in \cite{EV}, showed that the uniqueness can be regained by taking multi-frequency boundary measurement in a non-empty frequency interval (0,K) noticing the analyticity of wave-field on the frequency. Because of the wide applications, these problems have being attracted considerable attention. These kinds of problems have been extensively investigated by many researchers (see, for example,  \cite{AI, BLT1,BLRX,  CIL, E,EI, EG, IK,IL,IL1, J2,   YL,LY}). We also note that these type of problem and technique can apply to systems. For example, in \cite{EI2}, inverse source problems was considered for classical elasticity system. 
In this paper, we intended to prove 

\begin{theorem}
\label{maintheorem}
There exists a generic constant $C$ depending on the domain $\Omega $ such that

\begin{equation}
\label{Istability}
\parallel f\parallel_{(0)}^{2}(-1,1) \leq   C\Big(\epsilon ^2+\frac{M^{2}}{K^{\frac{2}{3}}E^{\frac{1}{4}}+1} \Big),
\end{equation}
for all $u\in H^2 (\Omega)$ solving \eqref{PDE} with $K>1$. Here

\begin{equation*}
\label{epsilon}
\epsilon ^2  = \int _{0} ^{K} \omega ^2\big(  | u(1,\omega) |^{2} + | u(-1,\omega) |^{2} \big ) d\omega,
\end{equation*}
$E=-ln\epsilon $ and $M = \max \big \{ \parallel f \parallel_{(1)}^2(-1,1) , 1 \big\}$ where $\parallel . \parallel _{(l)}(\Omega)$ is the standard Sobolev norm in $H^l(\Omega)$.
\end{theorem}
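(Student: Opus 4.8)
The plan is to convert the two boundary measurements into Fourier data for $f$ and then split the reconstruction into a low-frequency part controlled directly by the data and a high-frequency tail controlled by the a priori bound $M$ together with the analyticity of $\hat f$. First I would substitute the Green function \eqref{GreenF} into \eqref{u} and evaluate at $x=\pm1$. Since $\operatorname{supp}f\subset(-1,1)$ we have $|1-y|=1-y$ and $|-1-y|=1+y$ for $y\in(-1,1)$, so with $k=\omega$
\begin{equation*}
u(1,\omega)=\frac{ie^{i\omega}}{2\omega}\,\hat f(\omega),\qquad u(-1,\omega)=\frac{ie^{i\omega}}{2\omega}\,\hat f(-\omega),
\end{equation*}
where $\hat f(\omega)=\int_{-1}^{1}e^{-i\omega y}f(y)\,dy$. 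Hence $4\omega^{2}\big(|u(1,\omega)|^{2}+|u(-1,\omega)|^{2}\big)=|\hat f(\omega)|^{2}+|\hat f(-\omega)|^{2}$, and integrating in $\omega$ yields the identity
\begin{equation*}
\epsilon^{2}=\tfrac14\int_{-K}^{K}|\hat f(\omega)|^{2}\,d\omega .
\end{equation*}

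Next I would invoke Parseval, $\|f\|_{(0)}^{2}(-1,1)=\frac1{2\pi}\int_{\mathbb R}|\hat f(\omega)|^{2}\,d\omega$, and cut the frequency axis at $K$:
\begin{equation*}
\|f\|_{(0)}^{2}(-1,1)=\frac1{2\pi}\int_{-K}^{K}|\hat f|^{2}\,d\omega+\frac1{2\pi}\int_{|\omega|>K}|\hat f|^{2}\,d\omega\le C\epsilon^{2}+\frac1{2\pi}\int_{|\omega|>K}|\hat f|^{2}\,d\omega .
\end{equation*}
The first term is already of the required form, so the whole task reduces to bounding the high-frequency tail by the second term of \eqref{Istability}.

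For the tail I would use that, because $f$ is compactly supported in $(-1,1)$, the map $\omega\mapsto\hat f(\omega)$ extends to an entire function of exponential type at most $1$, with the Paley--Wiener bound $|\hat f(z)|\le \|f\|_{L^{1}}e^{|\mathrm{Im}\,z|}\le C\sqrt{M}\,e^{|\mathrm{Im}\,z|}$ and $\int_{\mathbb R}(1+\omega^{2})|\hat f|^{2}\le 2\pi M$. The idea is to transport the smallness of $\hat f$ on $(-K,K)$ outward by analytic continuation. Introducing a continuation radius $\rho>K$ and a height $h$, I would place $(-K,K)$ as part of the boundary of a suitable complex domain (for instance a rectangle $\{|\mathrm{Re}\,z|<\rho,\ |\mathrm{Im}\,z|<h\}$ slit along $(-K,K)$), upgrade the $L^{2}$ smallness of the data to an $L^{\infty}$ bound $|\hat f|\lesssim\epsilon$ on the slit via analyticity, and apply the two-constants (harmonic measure) theorem: since $|\hat f|\lesssim\sqrt M\,e^{h}$ on the domain, for real $\omega\in(K,\rho)$ one gets $|\hat f(\omega)|\le C\big(\sqrt M\,e^{h}\big)^{1-\beta(\omega)}\epsilon^{\,\beta(\omega)}$ with $\beta(\omega)\in(0,1)$ the harmonic measure of the slit. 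Squaring, integrating over $(K,\rho)$, and adding the crude remainder $\int_{|\omega|>\rho}|\hat f|^{2}\le M/\rho^{2}$ produces a tail estimate depending on the free parameters $h$ and $\rho$.

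The main obstacle, and the technical heart of the argument, is the quantitative harmonic-measure estimate for $\beta(\omega)$ and the ensuing optimization. Writing $\epsilon=e^{-E}$, the continuation contribution behaves like $M\,e^{2h}e^{-2\beta E}$ while the remainder is $M/\rho^{2}$; because the exponential type of $\hat f$ couples the admissible height $h$ to the growth factor $e^{h}$, balancing the two contributions while choosing $h$ and $\rho$ optimally is exactly what I expect to force the denominator $K^{2/3}E^{1/4}+1$ in \eqref{Istability} to appear. I anticipate that the bookkeeping of $\beta(\omega)$ (a conformal or harmonic-measure computation for the slit domain) and the calculus optimization in $(h,\rho)$ are the only delicate steps, the reduction to the Fourier transform being routine. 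I would also note that the exact identity for $\epsilon^{2}$ makes available the simpler, $E$-independent tail bound $\int_{|\omega|>K}|\hat f|^{2}\le 2\pi M/K^{2}$; the analytic-continuation route is preferred precisely because it records the dependence on the data quality $E$ that is the hallmark of increasing stability.
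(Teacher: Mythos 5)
Your overall architecture is genuinely parallel to the paper's: reduce the boundary data to knowledge of $\hat f$, split the $L^2$ norm at a frequency cut, control the middle band by a two-constants/harmonic-measure argument, and the far tail by the a priori $H^1$ bound. Indeed your reduction is cleaner than the paper's in two places: the exact identity $\epsilon^2=\frac14\int_{-K}^{K}|\hat f(\omega)|^2\,d\omega$ plus Parseval replaces the paper's Lemma 2.3 (which is only quoted from \cite{YL}), and your remainder bound $\int_{|\omega|>\rho}|\hat f|^2\le 2\pi M/\rho^2$ is exactly the content of Lemma \ref{BoundK-Infty}, proved there by integration by parts. But there are two genuine gaps. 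The first is the step ``upgrade the $L^2$ smallness of the data to an $L^\infty$ bound $|\hat f|\lesssim\epsilon$ on the slit via analyticity.'' As stated this is false: $\int_{-K}^{K}|\hat f|^2\le 4\epsilon^2$ does not give a pointwise bound of order $\epsilon$. The best generic upgrade uses the Bernstein-type bound $|\hat f'(\omega)|\le\|yf\|_{L^1}\le C\sqrt{M}$ together with the observation that $|\hat f(\omega_0)|=\delta$ forces $|\hat f|\ge\delta/2$ on an interval of length $\delta/(C\sqrt{M})$, which only yields $\|\hat f\|_{L^\infty(-K,K)}\le CM^{1/6}\epsilon^{2/3}$. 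That loss is in principle absorbable (it rescales $E$ by a constant and contaminates it with a $\ln M$ term), but it must be argued, and it changes the exponents you later optimize. Note that the paper's device of continuing the functions $I_1(k),I_2(k)$ of \eqref{I1-I2} --- integrals of the data with a \emph{variable upper endpoint} --- exists precisely to avoid this issue: those functions are pointwise bounded by $\epsilon^2$ on $[0,K]$ by their very definition, so no $L^2\to L^\infty$ upgrade is ever needed.

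The second and more serious gap is that the decisive quantitative step is not carried out. You defer both the lower bound for the harmonic measure $\beta(\omega)$ of the slit in your rectangle and the optimization over $(h,\rho)$, saying you ``anticipate'' they force the denominator $K^{2/3}E^{1/4}+1$. That computation is the actual content of the paper's proof: its Lemma 2.2 (quoted from \cite{CIL}) gives the explicit bound $\mu(k)\ge\frac1\pi\big((k/K)^4-1\big)^{-1/2}$ for the sector $\{|\arg k|<\pi/4\}$ slit along $[0,K]$, which is then combined with the explicit choice \eqref{k} of the cut point ($k=K^{2/3}E^{1/4}$ when $2^{1/4}K^{1/3}<E^{1/4}$, else $k=K$), the elementary inequality $e^{-t}\le 6/t^3$, and the comparison $K^{2/3}E^{1/4}<K^2E^{3/2}$ to produce \eqref{Istability}. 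Without your analogue of this --- an explicit lower bound for $\beta$ on $(K,\rho)$, insertion of $\epsilon=e^{-E}$, and an actual balancing of $Me^{2h}e^{-2\beta E}$ against $M/\rho^2$ --- the exponent $K^{2/3}E^{1/4}$ is an expectation, not a conclusion. Moreover there is no guarantee your geometry reproduces it: harmonic measure of the slit seen from real points in a rectangle of height $h$ decays exponentially in the distance (measured against $h$), unlike the polynomial decay $\sim(K/k)^2$ in the fixed sector, so the optimization genuinely has to be redone and may yield a different (possibly worse) exponent. In short: right architecture, essentially the paper's, but the two load-bearing steps --- the pointwise smallness on the slit and the harmonic-measure/optimization computation --- are respectively incorrect as stated and missing.
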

\begin{center}
\begin{equation*}
f_1 = \begin{cases}
                f & \mbox{if } x > 0, \\
0 & \mbox{if } x < 0,
       \end{cases} \quad
f_{2} = \begin{cases}
            0  & \text{if } x > 0, \\
            f & \text{if } x < 0.
       \end{cases}
\end{equation*}

 \end{center}
 {\bf Remark 1.1: } The estimate in \eqref{Istability} consists of two parts: the data discrepancy and the high frequency part. The first part is of the LIpschitz type. The second part is of logarithmic type. The second part decrease as $K$ increases which makes the problem more stable. The estimate \eqref{Istability} also implies the uniqueness of the inverse source problem.
\section{Proof of Theorem 1.1}
\subsection{Increasing Stability of Continuation to higher frequencies}
Let

\begin{equation*}
    I(k)=I_1(k)+I_2(k)
\end{equation*}
where
\begin{equation}
\label{I1-I2}
I_1(k)= \int _{0} ^{k} \omega ^2 | u(-1,\omega) |^{2} d\omega, \qquad  I_2 (k)= \int _{0}^{k}  \omega ^2 | u(1,\omega) |^{2} d\omega,
\end{equation}
using \eqref{u} and a simple calculation shows that
\begin{equation}
\label{omegau1u-1}
    \omega u(1,\omega)=\int_{0}^{1}\frac{i}{2}e^{i\omega(1- y)}  f_1(y)dy,  \qquad   \omega u(-1,\omega)=\int_{-1}^{0}\frac{i}{2}e^{i\omega (-1- y)}  f_2(y)dy,
\end{equation}
where $y \in (-1,1)$. Functions $I_1$ and $I_2$ are both analytic with respect to the wave number $k \in \mathbb{C} $ and play important roles in relating the inverse source problems of the Helmholtz equation and the Cauchy problems for the wave equations. \\
\begin{lemma}
Let $supp f \in (-1,1)$ and $f \in H^1 (-1,1) $. Then
\begin{equation}
\label{boundI1}
|I_1 (k)|\leq C\Big( |k| \parallel f \parallel_{(0)}^2 (-1,1)  \Big)e^{2|k_2|},
\end{equation}
\begin{equation}
\label{boundI2}
|I_2(k)|\leq C\Big( |k| \parallel f \parallel_{(0)}^2 (-1,1)  \Big)e^{2|k_2|}.
\end{equation}

\end{lemma}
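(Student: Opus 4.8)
The plan is to treat $I_1(k)$ and $I_2(k)$ as genuinely holomorphic functions of the complex wave number and then estimate them along a convenient contour. The conceptual crux is the first step: making precise the analytic continuation of the integrand, since $\omega^2|u(-1,\omega)|^2$ is manifestly not holomorphic in $\omega$ (it involves a complex conjugate), so the symbol $\int_0^k$ with a complex endpoint $k$ has no a priori meaning. For real $\omega$ we have $\omega^2|u(-1,\omega)|^2 = |g(\omega)|^2$, where by \eqref{omegau1u-1} the function $g(\omega) = \omega u(-1,\omega) = \int_{-1}^0 \tfrac{i}{2} e^{i\omega(-1-y)} f_2(y)\,dy$ is entire in $\omega$. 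I would introduce its Schwarz reflection $g^*(\omega) = \overline{g(\bar\omega)}$, which is again entire and agrees with $\overline{g(\omega)}$ on the real axis; explicitly $g^*(\omega) = \int_{-1}^0 \tfrac{-i}{2} e^{-i\omega(-1-y)} \overline{f_2(y)}\,dy$. The product $g\,g^*$ is then the correct entire extension of the integrand, agreeing with $\omega^2|u(-1,\omega)|^2$ for real $\omega$, and $I_1(k) = \int_0^k g(\omega)g^*(\omega)\,d\omega$ is well defined.

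With the integrand entire, Cauchy's theorem renders the integral independent of the path joining $0$ to $k$, so I would integrate along the straight segment $\omega = tk$, $t\in[0,1]$, for which $d\omega = k\,dt$. This yields the preliminary bound $|I_1(k)| \leq |k|\int_0^1 |g(tk)|\,|g^*(tk)|\,dt$, reducing the task to a pointwise estimate of $g$ and $g^*$ on the segment.

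The remaining work is a direct exponential estimate. Writing $\omega = \omega_1 + i\omega_2$, a short computation gives $|e^{i\omega(-1-y)}| = e^{\omega_2(1+y)}$, and because $\mathrm{supp}\,f \subset (-1,1)$ forces $1+y \in (0,1)$ throughout the domain of integration, this factor is bounded by $e^{|\omega_2|}$ irrespective of the sign of $\omega_2$. Extracting it and applying Cauchy--Schwarz to pass from the $L^1$ to the $L^2$ norm gives $|g(\omega)| \leq \tfrac12 e^{|\omega_2|} \parallel f \parallel_{(0)}(-1,1)$, with the identical bound for $g^*$. On the segment $\omega = tk$ one has $|\omega_2| = t|k_2| \leq |k_2|$, so $|g(tk)\,g^*(tk)| \leq \tfrac14 e^{2|k_2|} \parallel f \parallel_{(0)}^2(-1,1)$. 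Integrating this constant bound over $t\in[0,1]$ and restoring the factor $|k|$ produces \eqref{boundI1} with $C=\tfrac14$.

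The estimate \eqref{boundI2} for $I_2$ follows by the same argument applied to $\omega u(1,\omega) = \int_0^1 \tfrac{i}{2} e^{i\omega(1-y)} f_1(y)\,dy$, where the support condition gives $1-y \in (0,1)$ for $y \in (0,1)$ and hence the same controlling factor $e^{|\omega_2|}$. The only delicate point in the whole argument is the holomorphic extension via $g\,g^*$; once that is in place, the contour choice and the exponential bound are routine, and the $H^1$ hypothesis in the statement is not even needed here, only $f\in L^2(-1,1)$.
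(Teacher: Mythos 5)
Your proof is correct and follows essentially the same route as the paper's: path-independence of the integral, integration along the straight segment $\omega = tk$, the exponential bound $e^{|\omega_2|}$ forced by the support condition, and Cauchy--Schwarz to reach $\|f\|_{(0)}^2(-1,1)$. The one genuine refinement is that you make the analytic continuation precise via the Schwarz reflection $g^*(\omega)=\overline{g(\bar\omega)}$, where the paper simply keeps the (formally meaningless for complex $k$) notation $|\cdot|^2$ inside \eqref{I1} and \eqref{I2}; your remark that only $f\in L^2(-1,1)$ is needed here is also accurate.
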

\begin{proof}   

Since we have $k= k_1 +k_2 i$ is complex analytic on the set $\mathbb{S}\setminus [0,k]$, where $\mathbb{S}$ is the sector $S=\{ k \in \mathbb{C}:|$arg$\, k| < \frac{\pi}{4}    \}$ with $k=k_1 +ik_2$. Since the integrands in \eqref{I1-I2} are analytic functions of $k$ in $ \mathbb{S}$, their integrals with respect to $\omega$ can be taken over any path in $\mathbb{S}$ joining points $0$ and $k$ in the complex plane. Using the change of variable $\omega=ks$, $s\in (0,1)$ in the line integral \eqref{u}, the fact that $y\in(-1,1)$.
\begin{equation}
\label{I1}
I_1 (k)= \int _{0} ^{1} ks \big | \int_{0}^{1}\frac{1}{2}e^{i(ks)(1- y)}  f_1(y)dy   \big |^2ds,
\end{equation}
and
\begin{equation}
\label{I2}
    I_2 (k)= \int _{0} ^{1}ks \big | \int_{-1}^{0}\frac{1}{2}e^{i(ks)(-1- y)}  f_2(y)dy   \big |^2ds.
\end{equation}
Noting
\begin{equation*}
    |e^{ iks (-1- y)}|\leq e^{2|k_2|}, \quad |e^{ iks (1- y)}|\leq e^{2|k_2|},
\end{equation*}                                                                                                                                                                                                                                                     
 using the Schwartz inequality and  integrating with respect to $s$, using the bound for $|k|$ in $\mathbb{S}$, we complete the proof of \eqref{boundI1}. Using the same technique, we can prove the \eqref{boundI2}.

\end{proof}
Noticing that functions $I_1(k), I_2(k)$ are analytic functions of $k=k_1+ik_2 \in \mathbb{S}$ and $|k_2|\leq k_1$. The following steps are essential to link the unknown $I_1(k)$ and $I_2(k)$ for $k\in [K,\infty)$ to the known value $\epsilon$ in \eqref{PDE}. Obviously

\begin{equation}
\label{Ie^}
|I_1(k) e^{-2 k}|\leq C\Big( |k_1| \parallel f \parallel_{(0)}^2 (-1,1)\Big)e^{-2 k_1} \leq C M^{2},
\end{equation}
where $M=\max \big\{\parallel f\parallel_{(0)}^2  (-1,1) , 1 \big \}$. With the similar argument bound \eqref{Ie^} is true for $I_2 (k)$. Observing that
\begin{equation*}
|I_1(k)  e^{-2 k}|\leq \epsilon ^2, \quad |I_2(k)  e^{-2 k}|\leq \epsilon ^2  \textit{ on } [0, K].
\end{equation*}
Let $\mu (k) $ be the harmonic measure of the interval $[0,K]$ in $\mathbb{S}\backslash [0,K], $ then as known (for example see \cite{I}, p.67), from two previous inequalities and analyticity of the function $I_{1}(k) e^{-2 k}$ and $ I_{2}(k) e^{-2 k} $ we conclude that
\begin{equation}
\label{I1Epsilon}
|I_1(k) e^{-2 k}|\leq  C\epsilon ^{2 \mu(k)} M^{2},
\end{equation}
when $K<k< +\infty$. Similarly it also yields for
\begin{equation}
|I_2(k) e^{-2k}|\leq  C\epsilon ^{2 \mu(k)} M^{2},
\end{equation}
consequently
\begin{equation}
\label{I2Epsilon}
|I(k) e^{-2 k}|\leq  C\epsilon ^{2 \mu(k)} M^{2}.
\end{equation}

To achieve a lower bound of the harmonic measure $\mu (k)$, we use the following technical lemma. The proof can be found in \cite{CIL}.
\begin{lemma}
Let $\mu (k) $ be the harmonic measure of the interval $[0,K]$ in $\mathbb{S}\backslash [0,K]$, then
\begin{equation}
 \begin{cases}
            \frac{1}{2}\leq \mu (k), & \mbox{if } \quad  0<k<2^{\frac{1}{4}}K, \\
\frac{1}{\pi} \Big( \big ( \frac{k}{K} \big)^{4} -1 \Big)^{\frac{-1}{2}} \leq \mu(k), & \mbox{if } \quad
2^{\frac{1}{4}}K <k.
       \end{cases}
\end{equation}
\end{lemma}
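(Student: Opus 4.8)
The plan is to exploit the conformal invariance of harmonic measure and reduce the statement to the textbook computation of the harmonic measure of an interval in the upper half-plane, where the answer is the subtended-angle formula. Recall that $\mu(k)$ is the bounded harmonic function on $\mathbb{S}\setminus[0,K]$ that equals $1$ on the slit $[0,K]$ and $0$ on the two bounding rays $\arg k=\pm\pi/4$. Since any conformal map carries this boundary-value problem to the corresponding one on the image domain, it suffices to exhibit an explicit map that straightens everything out and then read off the value at the (transported) test point.

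First I would open the sector. Because $\mathbb{S}$ has opening angle $\pi/2$, the power map $\xi=z^{4}$ sends $\mathbb{S}$ conformally onto the slit plane $\mathbb{C}\setminus(-\infty,0]$; it carries the two rays (where $\mu=0$) onto $(-\infty,0]$, the slit $[0,K]$ (where $\mu=1$) onto $[0,K^{4}]$, and the test point $k$ onto $k^{4}$. Thus $\mathbb{S}\setminus[0,K]$ becomes $\mathbb{C}\setminus(-\infty,K^{4}]$ with data $0$ on $(-\infty,0]$ and $1$ on $[0,K^{4}]$. Composing with $\psi=\sqrt{K^{4}-\xi}$ (branch cut along the positive reals) maps this onto the upper half-plane, sending $[0,K^{4}]$ onto the interval $[-K^{2},K^{2}]$ (data $1$), $(-\infty,0]$ onto $\{\psi\in\mathbb{R}:|\psi|\ge K^{2}\}$ (data $0$), and the test point to $i\sqrt{k^{4}-K^{4}}$. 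The subtended-angle (Poisson-kernel) formula then gives the closed form
\[
\mu(k)=\frac{2}{\pi}\arctan\!\left(\frac{K^{2}}{\sqrt{k^{4}-K^{4}}}\right)=\frac{2}{\pi}\arctan\!\left(\big((k/K)^{4}-1\big)^{-1/2}\right),\qquad k>K .
\]

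Both cases of the lemma then drop out from elementary monotonicity of $\arctan$. For $K<k\le 2^{1/4}K$ one has $(k/K)^4-1\le 1$, so the argument is $\ge 1$ and $\arctan(\cdot)\ge\pi/4$, giving $\mu(k)\ge\tfrac12$; for $0<k\le K$ the test point lies on the slit, where $\mu=1$, so $\mu\ge\tfrac12$ is immediate. For $k>2^{1/4}K$ the argument $x=((k/K)^4-1)^{-1/2}$ lies in $(0,1)$, and the inequality $\arctan x\ge x/2$ on $(0,1]$ (which follows since $\frac{d}{dx}(\arctan x-\tfrac{x}{2})=\frac{1-x^2}{2(1+x^2)}\ge0$ there) yields $\mu(k)\ge\frac{1}{\pi}\big((k/K)^4-1\big)^{-1/2}$, exactly the claimed bound. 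I expect the only real obstacle to be bookkeeping rather than ideas: one must fix the branches of $z^{4}$ and $\sqrt{K^{4}-\xi}$ consistently and track which side of each slit receives which boundary value, so that the symmetry across the real axis (which makes the far part $[K^{4},\infty)$ a Neumann line that the square root folds away) is handled without sign errors. Once the map and its boundary correspondence are pinned down, the remainder is the standard evaluation $\frac{1}{\pi}\int_{-K^{2}}^{K^{2}}\frac{h}{t^{2}+h^{2}}\,dt$ at height $h=\sqrt{k^{4}-K^{4}}$.
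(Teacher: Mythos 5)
Your proof is correct, but there is nothing in the paper to compare it against line by line: the paper gives no argument for this lemma at all, it simply defers to the reference \cite{CIL}. What you have written is essentially a self-contained reconstruction of the argument used there: conformal invariance of harmonic measure, the opening map $z\mapsto z^{4}$ (injective on the sector of opening $\pi/2$, carrying $\mathbb{S}\setminus[0,K]$ onto $\mathbb{C}\setminus(-\infty,K^{4}]$), the square-root map $\psi=\sqrt{K^{4}-\xi}$ onto the upper half-plane, and the Poisson (subtended-angle) formula at the transported point $i\sqrt{k^{4}-K^{4}}$. Your branch bookkeeping is consistent: both sides of the slit $[0,K^{4}]$ carry the value $1$ and together map onto $[-K^{2},K^{2}]$, both sides of $(-\infty,0]$ carry $0$ and map onto $\{|\psi|\geq K^{2}\}$, so the resulting closed form
\begin{equation*}
\mu(k)=\frac{2}{\pi}\arctan\Big(\big((k/K)^{4}-1\big)^{-1/2}\Big)
=\frac{2}{\pi}\arcsin\big((K/k)^{2}\big),\qquad k>K,
\end{equation*}
is exact. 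The two stated bounds then follow as you say: for $K<k<2^{1/4}K$ the argument of $\arctan$ exceeds $1$, so $\mu(k)\geq\tfrac12$ (and for $0<k\leq K$ the point lies on the slit, where $\mu=1$); for $k>2^{1/4}K$ the inequality $\arctan x\geq x/2$ on $(0,1]$, which your derivative computation $\frac{d}{dx}(\arctan x-\tfrac{x}{2})=\frac{1-x^{2}}{2(1+x^{2})}\geq 0$ justifies, gives exactly $\frac{1}{\pi}\big((k/K)^{4}-1\big)^{-1/2}\leq\mu(k)$. What your writeup buys over the paper's bare citation is precisely this explicit formula, which is strictly stronger than the two one-sided bounds the lemma asserts; the only cosmetic slip is the remark about the square root ``folding away'' a Neumann line on $[K^{4},\infty)$ --- that ray is interior to the domain and the map is injective there, so no folding occurs --- but nothing in the proof depends on that remark.
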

\vspace{0.5cm}

\begin{lemma}
Let source function $f\in L^2 (-1,1)$ with $supp f \subset (-1,1)$, then
\begin{equation*}
\parallel f\parallel^{2} _{(0)} (-1,1) \leq C \int_{0}^{\infty} \omega ^2 \big ( |u(-1,\omega)|^2 + |u(1,\omega)|^2 \big)d\omega.
\end{equation*}
\end{lemma}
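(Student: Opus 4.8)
The plan is to read the right-hand side as a Plancherel identity for the Fourier transform of $f$, with the two boundary points $x=1$ and $x=-1$ furnishing the positive and negative halves of the frequency axis respectively. First I would go back to the solution formula \eqref{u}--\eqref{GreenF} and evaluate it at the endpoints. Since $|1-y|=1-y$ and $|-1-y|=1+y$ for every $y\in(-1,1)$, a direct computation (this is the content of \eqref{omegau1u-1}) gives, with $\widehat f(\omega):=\int_{-1}^{1}e^{-i\omega y}f(y)\,dy$ denoting the Fourier transform of the zero-extension of $f$,
\[
\omega u(1,\omega)=\frac{i}{2}e^{i\omega}\widehat f(\omega),\qquad
\omega u(-1,\omega)=\frac{i}{2}e^{i\omega}\widehat f(-\omega).
\]
Because $|e^{i\omega}|=1$ for real $\omega$, taking squared moduli removes every phase factor and yields $\omega^{2}|u(1,\omega)|^{2}=\tfrac14|\widehat f(\omega)|^{2}$ and $\omega^{2}|u(-1,\omega)|^{2}=\tfrac14|\widehat f(-\omega)|^{2}$.

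Next I would add these two identities and integrate over $\omega\in(0,\infty)$. The endpoint $x=1$ contributes $\int_{0}^{\infty}|\widehat f(\omega)|^{2}\,d\omega$, while in the $x=-1$ term the change of variable $\omega\mapsto-\omega$ converts $\int_{0}^{\infty}|\widehat f(-\omega)|^{2}\,d\omega$ into $\int_{-\infty}^{0}|\widehat f(\omega)|^{2}\,d\omega$. The two halves glue into a single integral over the whole line,
\[
\int_{0}^{\infty}\omega^{2}\big(|u(1,\omega)|^{2}+|u(-1,\omega)|^{2}\big)\,d\omega
=\frac14\int_{-\infty}^{\infty}|\widehat f(\omega)|^{2}\,d\omega.
\]

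Finally, since $f\in L^{2}(-1,1)$ has compact support, its zero-extension belongs to $L^{2}(\mathbb R)$ and Plancherel's theorem applies: $\int_{-\infty}^{\infty}|\widehat f(\omega)|^{2}\,d\omega=2\pi\|f\|_{(0)}^{2}(-1,1)$. Substituting into the previous display proves the lemma, in fact as an equality with the explicit constant $C=2/\pi$.

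The one genuinely substantive step --- and the point I would flag as the crux --- is the observation that the two boundary measurements \emph{together} reconstruct $\widehat f$ on all of $\mathbb R$: the trace at $x=1$ carries the positive frequencies and the trace at $x=-1$ the negative ones. A single endpoint would recover only half the spectrum, and for a general complex $f$ (whose transform need not be symmetric) the corresponding one-sided integral can be arbitrarily small compared with $\|f\|_{(0)}^{2}$, so the estimate would fail; this is precisely why both endpoints must be measured. The remaining ingredients --- identifying $\widehat f$ as the genuine Fourier transform of the extension, and discarding the unit-modulus phases $e^{i\omega}$ --- are routine.
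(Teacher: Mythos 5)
Your proof is correct, and it is actually more informative than what the paper offers: the paper's entire proof of this lemma is a one-line appeal to the result of Zhao and Li \cite{YL} (stated there for a two-layered medium, specialized here by setting $k_1=k_2$), whereas you give a self-contained Plancherel argument that yields the estimate as an exact identity, $\int_0^\infty \omega^2\big(|u(1,\omega)|^2+|u(-1,\omega)|^2\big)\,d\omega = \tfrac{\pi}{2}\|f\|^2_{(0)}(-1,1)$, i.e.\ with the explicit constant $C=2/\pi$. Your key structural point --- that the trace at $x=1$ carries $\widehat f$ on the positive frequency half-line, the trace at $x=-1$ carries it on the negative half-line, and only together do they fill out the whole axis --- is precisely the mechanism underlying the cited result, so in spirit the two routes coincide; yours simply inlines what the paper outsources to the reference, and it buys a sharp constant and a visible explanation of why both endpoints are needed for complex $f$. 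One caveat: you present your endpoint formulas as "the content of \eqref{omegau1u-1}," but the paper's \eqref{omegau1u-1} is not the same statement --- there $u(1,\omega)$ is expressed through $f_1$ alone and $u(-1,\omega)$ through $f_2$ alone, silently dropping the contribution of the other half of the source, and the phase in the second formula carries the wrong sign. Your versions, derived directly from \eqref{u}--\eqref{GreenF} using $|1-y|=1-y$ and $|-1-y|=1+y$ on $(-1,1)$, are the correct ones (and are what make the full-axis Plancherel identity work), so you should state them as your own computation rather than as a restatement of \eqref{omegau1u-1}.
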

\begin{proof}
Using the result of \cite{YL} by applying the Green function \eqref{GreenF} and letting $k_1=k_2=k$.
\end{proof}

\begin{lemma}
Let source function $f\in L^2 (-1,1)$, then
\begin{equation*}
    \omega^2 |u(-1,\omega)|^2  \leq C \Big | \int _{-1}^{0}e^{2\omega y } f_2 (y)dy\Big |^ 2
\end{equation*}
\begin{equation*}
     \omega^2  |u(1,\omega)|^2 \leq C \Big | \int _{0}^{1}e^{2\omega y } f_1 (y)dy\Big |^ 2
\end{equation*}
\end{lemma}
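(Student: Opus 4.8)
The plan is to recognize that, for each sign, the two sides of the asserted inequality are values of a single entire function evaluated at two different complex frequencies, so that the estimate reduces to comparing that function on the real axis with its value on the imaginary axis. Introduce
\[
F_2(\zeta)=\int_{-1}^{0}e^{i\zeta(-1-y)}f_2(y)\,dy,\qquad F_1(\zeta)=\int_{0}^{1}e^{i\zeta(1-y)}f_1(y)\,dy,
\]
both entire in $\zeta\in\mathbb{C}$ since $f_1,f_2$ are $L^2$ functions supported in a bounded interval. By \eqref{omegau1u-1}, at a real frequency $\zeta=\omega$ one has $2\omega u(-1,\omega)=iF_2(\omega)$ and $2\omega u(1,\omega)=iF_1(\omega)$, so the left-hand sides are $\omega^2|u(-1,\omega)|^2=\tfrac14|F_2(\omega)|^2$ and $\omega^2|u(1,\omega)|^2=\tfrac14|F_1(\omega)|^2$.

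The next step is to identify the right-hand sides with the same functions at the purely imaginary frequency $\zeta=2i\omega$. A direct substitution gives $e^{i(2i\omega)(-1-y)}=e^{2\omega(1+y)}=e^{2\omega}e^{2\omega y}$ and $e^{i(2i\omega)(1-y)}=e^{-2\omega(1-y)}=e^{-2\omega}e^{2\omega y}$, so that
\[
\int_{-1}^{0}e^{2\omega y}f_2(y)\,dy=e^{-2\omega}F_2(2i\omega),\qquad \int_{0}^{1}e^{2\omega y}f_1(y)\,dy=e^{2\omega}F_1(2i\omega).
\]
Thus the two claimed inequalities are exactly $|F_2(\omega)|\le C\,e^{-2\omega}|F_2(2i\omega)|$ and $|F_1(\omega)|\le C\,e^{2\omega}|F_1(2i\omega)|$; in the second the exponential weight is $\ge 1$ and hence harmless, while the first asks for a genuine gain of $e^{-2\omega}$ in passing from the imaginary axis back to the real axis.

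To produce these comparisons I would exploit the analyticity of $F_1,F_2$ in the first quadrant $\{0\le\arg\zeta\le\pi/2\}$ together with a Phragm\'en--Lindel\"of (three-lines) estimate, in the same spirit as the harmonic-measure argument already applied to $I_1,I_2$ in \eqref{I1Epsilon}--\eqref{I2Epsilon}. The exponential-type control needed on the two boundary rays follows from $|e^{i\zeta(\pm1-y)}|\le e^{2|\mathrm{Im}\,\zeta|}$ on the support, together with the Cauchy--Schwarz bound $\|f_j\|_{L^1}\le C\|f\|_{(0)}(-1,1)$; the factors $e^{2\omega y}\le1$ on $\mathrm{supp}\,f_2$ and $e^{\mp2\omega}$ are then absorbed into the generic constant $C$, after which the bound for $F_1$ is routine.

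The main obstacle is precisely the first comparison $|F_2(\omega)|\le C\,e^{-2\omega}|F_2(2i\omega)|$: it cannot be read off from any pointwise bound on the integrands, because on the support one has $|e^{i\omega(-1-y)}|=1$ while $e^{2\omega y}\le1$, so the oscillatory kernel is never dominated by the real one. The inequality must instead be interpreted through the identification above as a statement about one analytic function on two rays, and the delicate points are controlling the growth of $F_2$ on the connecting arc and verifying that the resulting constant is uniform in $\omega\ge0$. The two bounds together then yield the two displayed estimates.
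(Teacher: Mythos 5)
Your algebraic reduction is correct as a literal reading of the lemma, but the strategy you build on it cannot succeed, and the obstacle you flag for $F_2$ is equally fatal for $F_1$. A Phragm\'en--Lindel\"of or harmonic-measure argument of the kind used for \eqref{I1Epsilon} bounds an analytic function at an interior point of a sector by \emph{suprema} of its modulus over pieces of the boundary; no such argument can bound $|F_j(\omega)|$ by the modulus of $F_j$ at the \emph{single point} $2i\omega$, since an entire function may vanish there without vanishing at $\omega$. Concretely, the reduced inequalities (and hence the lemma, read at real frequencies) are false: for any $\omega_0>0$ the kernels $e^{2\omega_0 y}$ and $e^{i\omega_0(-1-y)}$ are linearly independent on $(-1,0)$, so there exists $f_2\in C_0^\infty(-1,0)$ with $\int_{-1}^0 e^{2\omega_0 y}f_2(y)\,dy=0$ while $\int_{-1}^0 e^{i\omega_0(-1-y)}f_2(y)\,dy\neq 0$; at $\omega=\omega_0$ the right-hand side vanishes and the left-hand side does not, so no constant $C$ exists. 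The same construction on $(0,1)$ refutes your claim that the $F_1$ bound is ``routine'': the pointwise fact $e^{2\omega y}\ge 1$ does not prevent $\int_0^1 e^{2\omega y}f_1(y)\,dy$ from vanishing by cancellation, so it can never dominate $\|f_1\|_{L^1}$. (Separately, absorbing the $\omega$-dependent factors $e^{\pm 2\omega}$ into a ``generic constant'' is not permissible.)

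The paper's proof is of an entirely different nature: it treats the lemma as an immediate pointwise consequence of \eqref{omegau1u-1} and $y\in(-1,1)$. That is coherent only if the left-hand sides are understood as the analytic continuation of $\omega u(\pm 1,\omega)$ to purely imaginary wavenumbers --- the same continuation into the sector $\mathbb{S}$ that the paper uses for $I_1,I_2$. Indeed, substituting $\omega\mapsto 2i\omega$ in \eqref{omegau1u-1} gives $2i\omega\,u(1,2i\omega)=\tfrac{i}{2}e^{-2\omega}\int_0^1 e^{2\omega y}f_1(y)\,dy$, and since $e^{-2\omega}\le 1$ the second estimate is trivial; similarly $u(-1,\cdot)$ at $-2i\omega$ yields the first estimate with the mirrored kernel $e^{-2\omega y}$ on $(-1,0)$ (which suggests the stated kernel $e^{2\omega y}$ for $f_2$ also carries a sign typo). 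So where the paper sees an identity at imaginary frequencies plus the bound $e^{-4\omega}\le 1$, you have set out to prove a genuinely false statement at real frequencies; the gap is not in your execution of the three-lines machinery but in the reading of the statement, and no refinement of that machinery will close it.
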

\begin{proof}
It follows from \eqref{omegau1u-1} and $y\in (-1,1)$.
\end{proof}

\subsection{Increasing stability for inverse source problem}
To continue the estimate for reminders in \eqref{I1} and \eqref{I2} for $(k, \infty)$, we need the following lemma.
\begin{lemma}
\label{BoundK-Infty}
Let $u$ be a solution to the forward problem \eqref{PDE} with $f_1 \in H^1(\Omega )$  with $supp f  \subset (-1,1)$, then
\begin{equation}
\label{lemma4.1}
\int_{k}^{\infty} \omega ^2 | u(-1,\omega) |^2 d\omega+ \int_{k}^{\infty}   \omega ^2|u(1,\omega) |^{2} d\omega \leq C k ^{-1}\Big(\parallel f\parallel ^2  _{(1)} (-1,1) \Big)
 \end{equation}
\end{lemma}

\begin{proof}
Using \eqref{omegau1u-1}, we obtain
\begin{equation}
\label{Line1lemma4.1}
   \int _{k}^{\infty} \omega ^2 | u(-1,\omega) |^{2}d\omega + \int _{k}^{\infty} \omega ^2 | u(1,\omega) |^{2}d\omega
\end{equation}

\begin{equation}
\label{Line2lemma4.1}
    \leq C \big ( \int _{k}^{\infty}\Big | \int _{0}^{1} e^{i \omega y}  f_{1} (y)dy  \Big|^2d\omega  + \int _{k}^{\infty}\Big | \int _{-1}^{0} e^{i \omega y}  f_{2} (y)dy  \Big|^2d\omega \Big ).
\end{equation}
Using integration by parts and the fact that $ supp f_1 \subset (0,1)  $ and $supp f_2 \subset (0,1) $, we have
\begin{equation*}
\int _{0}^{1} e^{ -i \omega y}  f _1(y)dy = \frac{1}{ i \omega} \int _{0}^{1}e^{ -i\omega y} ( \partial _ yf_{1}(y) )dy, 
\end{equation*}
and
\begin{equation*}
\int _{-1}^{0} e^{-i \omega y}  f_{2} (y)dy = \frac{1}{i \omega} \int _{-1}^{0}e^{ -i \omega y}(\partial _ yf_{2} (y)  )dy,
\end{equation*}
consequently for the first and second terms in \eqref{Line2lemma4.1} we obtain
\begin{equation*}
   \Big | \int _{0}^{1} e^{i \omega y}  f_{1} (y)dy  \Big|^2 \leq  \frac{C}{\omega^2}\parallel f_{1}\parallel^2 _{(1)} (0,1) \leq  \frac{C}{\omega^2} \parallel f_{1}\parallel^2 _{(1)} (-1,1)
\end{equation*}
\begin{equation*}
    \leq  \frac{C}{\omega^2} \parallel f\parallel^2 _{(1)} (-1,1),
\end{equation*}
utilizing the same argument for the second term in \eqref{Line2lemma4.1} and integrating with respect to $\omega$ the proof is complete.
\end{proof}
Now, we are ready to proof Theorem \ref{maintheorem}.\\
\begin{proof}
We can assume that $\epsilon <1$ and $3\pi E^{-\frac{1}{4}} <1$, otherwise the bound \eqref{maintheorem} is obvious. Let
\begin{center}
\begin{equation}
\label{k}
k= \begin{cases}
          K^{\frac{2}{3}}E^{\frac{1}{4}} \quad \text{if} \quad 2^{\frac{1}{4}}K^{\frac{1}{3}}< E ^{\frac{1}{4}} \\
K \hspace{1.19 cm} \text{if}\quad E ^{\frac{1}{4}} \leq 2^{\frac{1}{4}}K^{\frac{1}{3}},
       \end{cases}
\end{equation}
 \end{center}
if $ E ^{\frac{1}{4}} \leq 2^{\frac{1}{4}}K^{\frac{1}{3}}$, then $k=K$, using the \eqref{I1Epsilon} and \eqref{I2Epsilon},  we can conclude
\begin{equation}
\label{I11}
|I (k)| \leq 2\epsilon ^2.
 \end{equation}

If $2^{\frac{1}{4}}K^{\frac{1}{3}}< E ^{\frac{1}{4}}$,
we can assume that $ E^{-\frac{1}{4}} <\frac{1}{4 \pi}$, otherwise $C<E$ and hence $K<C$ and the bound \eqref{Istability} is straightforward. From \eqref{k},  Lemma 2.2, \eqref{I1Epsilon} and the equality $\epsilon= \frac{1}{e^ E}$ we obtain
\begin{equation*}
\label{I1epsilon}
|I(k) |\leq  CM^{2} e^{4k}  e^{\frac{-2E}{\pi}\big( (\frac{k}{K})^4  -1 \big)^{\frac{-1}{2}}}
\end{equation*}

\begin{equation*}
\leq CM^{2}  e^{- \frac{2}{\pi}K^{\frac{2}{3}}E^{\frac{1}{2}}(1- \frac{5\pi}{2}   E^{\frac{-1}{4}} )},
\end{equation*}
using the trivial inequality $e^{-t} \leq \frac{6}{t^3}$ for $t>0$ and our assumption at the beginning of the proof, we obtain
\begin{equation}
\label{I12}
|I(k)|\leq  CM ^{2}  \frac{1}{K^2 E ^{\frac{3}{2}}\Big (1-\frac{5 \pi}{2} E^{-\frac{1}{4}}   \Big)^3}. 
\end{equation}
Due to the \eqref{I1}, \eqref{I11}, \eqref{I12}, and Lemma 2.5. we can conclude
\begin{equation}
\label{lastbound1}
\int ^{+\infty}_{0} \omega ^2 |u(-1,\omega)|^2 d\omega + \int ^{+\infty}_{0} \omega ^2 |u(1,\omega)|^2 d\omega
\end{equation}
\begin{equation*}
    \leq I(k)+  \int_{k}^{\infty} \omega ^2 |u(-1,\omega)|^2 d\omega + \int_{k}^{\infty} \omega ^2 |u(1,\omega)|^2 d\omega
\end{equation*}
\begin{equation*}
\leq 2\epsilon ^2 +   \frac{ C M^2 } {K^2 E ^{\frac{3}{2}}} + \frac{ \parallel f \parallel_{(2)}^2 (-1,1)} {K^{\frac{2}{3}}E^{\frac{1}{4}}+1} \Big).
\end{equation*}
Using the inequalities in \eqref{lastbound1} and Lemma 2.3., we finally obtain
\begin{equation*}
\parallel f \parallel _{(0)} ^2 (\Omega)\leq C \Big( \epsilon ^2 +   \frac{   M^{2} }{K^2 E ^{\frac{3}{2}}} + \frac{ \parallel f \parallel_{(1)}^2(-1,1) }{K^{\frac{2}{3}}E^{\frac{1}{4}}+1} \Big)
\end{equation*}
Due to the fact that $ K^{\frac{2}{3}} E ^{\frac{1}{4}}<K^2 E ^{\frac{3}{2}}$ for $1<K, 1<E$, the proof is complete.

\end{proof}

\section{Concluding Remarks} In this paper, we studied the inverse source problem with many frequencies in a one dimensional domain. The result showed that if $K$ grows the estimate improves. It also showed that if we have date exists for all wave number $k\in (0,\infty) $, the estimate will be a Lipschitz estimate. \\

{\bf Acknowledgment:}
We wish to thank Dr. M.N. Entekhabi for her support and encouragement.

$^a$ Northern Boarder University, Saudi Arabia;\\
E-mail address: Shahah.Almutairi@nbu.edu.sa\\

\vspace{0.2cm}

\hspace{-0.6cm}$^a$ Department of Mathematics, Florida A \&\ M University, Tallahassee, FL 32307, USA\\
E-mail address:  ajith.gunaratne@famu.edu

\end{document}